\def\AA{{\mathbb{A}}}
\def\PP{{\mathbb{P}}}
\def\CC{{\mathbb{C}}}
\def\NN{{\mathbb{N}}}
\let \cedilla =\c
\newcommand{\p}{{\frak{p}}}
\newcommand{\q}{{\frak{q}}}
\newcommand{\oo}{{\mathcal{O}}}
\newcommand{\spec}{{\mathrm{Spec \ }}}
\newcommand{\proj}{{\mathrm{Proj\ }}}
\newcommand{\cha}{{\operatorname{char}}}
\newcommand{\m}{{\frak{m}}}
\newcommand{\supp}{{\mathrm{supp}}}
\newcommand{\height}{{\mathrm{ht} }}
\newcommand{\ch}{{\rm char}}
 \newcommand{\R}{{\mathcal{R}}}
 \newcommand{\depth}{{\mathrm{depth}}}
 \newcommand{\grade}{{\mathrm{grade}}}
\newcommand{\M}{{\mathcal{M}}}
\newtheorem{thm}{Theorem}[section]
\newtheorem{cor}[thm]{Corollary}
\newtheorem{Corollary-Definition}[thm]{Corollary-Definition}
\newtheorem{prop-def}[thm]{Proposition-Definition}
\newtheorem{lem}[thm]{Lemma}
\theoremstyle{definition}
\newtheorem{defn}[thm]{Definition}
\newtheorem{ex}[thm]{Example}
\newtheorem{rem}[thm]{Remark}
\newtheorem{Proposition-Definition}[thm]{Proposition-Definition}
\begin{document}

\title[Higher direct images]{On vanishing of higher direct images of the structure sheaf}
\author[Ishii]{Shihoko Ishii}
\address[Shihoko Ishii]{Graduate School of Mathematical Science, the University of Tokyo,
Meguro-ku, Tokyo, 153-8914, Japan}
\email{shihoko@g.ecc.u-tokyo.ac.jp}

\author[Yoshida]{Ken-ichi Yoshida}
\address[Ken-ichi Yoshida]{Department of Mathematics, 
College of Humanities and Sciences, 
Nihon University, Setagaya-ku, Tokyo, 156-8550, Japan}
\email{yoshida.kennichi@nihon-u.ac.jp}

\dedicatory{Dedicated to Professor Kei-ichi Watanabe on the occasion of his 80th birthday}

\thanks{Mathematical Subject Classification 2020: 13A30, 13H05, 14B05\\
Key words: singularities, Rees algebra, normal ideal, depth of rings, higher direct image sheaf\\
The authors are partially supported by Grant-In-Aid (c) 22K03428 and 24K06678  of JSPS, 
and also by the Joint Usage Program of Osaka Central Advanced Mathematical Institute (OCAMI).
 }

\maketitle

\begin{abstract}
\noindent
We show the vanishing of the first direct image of the structure sheaf of a normal  scheme $X$
which is  mapped  properly and birationally over an regular scheme of any dimension.
On the other hand, for any dimension greater than two,
we show examples of a proper birational morphism from a normal and Cohen-Macaulay scheme to a regular scheme
such that the second direct image  does not vanish and has an isolated support.
\end{abstract}

\section{Introduction}
\noindent
Let $\varphi: X \to Y:=\spec R$ be a projective birational morphism of schemes.
It is well known that if $X$ and $Y$ are regular schemes of finite type over a field of characteristic $0$ 
the following holds:
$$R^i\varphi_*\oo_X=0\ \mbox{  for\ every} \ i>0$$
by \cite[pp.144-145]{hir}.
This is generalized by \cite[Theorem 1.1]{cr} for excellent regular schemes $X, Y$.

To illustrate some history, we temporarily assume that $Y$ is a variety over a field $k$ of characteristic $0$  with singularities.
Let $\varphi: X \to Y$ be a resolution of the singularities of $Y$.
Then $R^i\varphi_*\oo_X$ ($i\in \NN$) play important roles in measuring how far the singularities of $Y$ are from 
regularity since these are zero if $Y$ is regular.
The closest singularities to a regular point in this viewpoint are rational singularities defined as follows:

\begin{defn} Assuming $\cha k=0$, we say that $Y$ has {\it rational singularities} if for every resolution
$\varphi: X \to Y$ of the singularities of $Y$, the following holds:
\begin{equation}\label{vani}
 R^i\varphi_*\oo_X=0\ \mbox{for\ every\ } i>0.
\end{equation}
\end{defn}

Here, note that once the vanishings above hold for a resolution $\varphi$, then the vanishings also hold for every resolution of the
singularities of $Y$. 
It is also notable that a rational singularity automatically has the Cohen-Macaulay (CM for short) property,
which is considered an essential condition for a singularity to be ``good''.
On the other hand, in positive characteristic, the vanishing (\ref{vani}) does not automatically imply CM property.
Based on this and the present situation, the existence of resolutions of singularities in positive characteristic 
is not yet proved,
there is a challenge reasonably defining a rational singularity without using a resolution of the singularities as follows:

\begin{defn}[{\cite[Definition 1.3]{kov}}]\label{kov} A scheme $Y$ is said to have rational singularities, if
\begin{enumerate}
\item[(i)] Y is an excellent normal Cohen-Macaulay scheme that admits a dualizing complex, and
\item[(ii)] for each excellent normal Cohen-Macaulay scheme $X$, and each $\varphi : X \to  Y$ locally projective
birational morphism, the following vanishing holds:
$$ R^i\varphi_*\oo_X=0\ \mbox{for\ every\ } i>0.$$
\end {enumerate}
\end{defn} 

\noindent
From now on, we will distinguish between rational singularities in $\ch k=0$ and those in the definition \ref{kov} by denoting them as rational singularities and ``rational singularities'',  respectively.
Definition \ref{kov} asserts  that   $\varphi: X \to Y$ plays the role of a resolution of the singularities to
characterize ``rational singularities''.
If the definition of ``rational singularity'' is reasonable, in principle, a regular point should be ``rational'' (cf. \cite[Theorem 9.12]{kov}).
However,
 Linquan Ma shows an example of a regular point that does not satisfy this definition of ``rational singularity".
More concretely,
there is a projective birational morphism 
 $\varphi: X \to \AA^3_\CC,$ from normal and CM variety $X$, such that $ R^2\varphi_*\oo_X\neq0$
 (see Example \ref{ma}).
 It means that $\varphi: X\to Y$ in Definition \ref{kov} does not play the role of a resolution of the singularities of $Y$ to characterize 
 rational singularities.

 Here, note that in this example, the following vanishing still holds:
\begin{equation}\label{cm}
  \ R^i\varphi_*\oo_X=0\ \mbox{ holds\  for\  every\ } 0< i <N-1,
\end{equation}  
 where $N=\dim Y$.
 Therefore, one may still expect that 
$\varphi: X \to Y$ with normal CM variety $X$ would play the role of a resolution of the singularities to
characterize an isolated CM singularity.
Here, remember that if $\varphi: X \to Y$ is a resolution of $N$-dimensional isolated  singularity $(Y, 0)$ (therefore $R^i\varphi_*\oo_X |_{Y\setminus\{0\}}=0$
 $(i>0)$ ),
 then, 
 the singularity $(Y,0)$ is CM if and only if 
 $ \ R^i\varphi_*\oo_X=0\ \mbox{ holds\  for\  every\ } 0< i <N-1.$
 
 \vskip.5truecm
 \noindent
 {\bf Question.}
 For a  CM closed point $0\in Y$ on an $N$-dimensional scheme ($N\geq3$) and a proper birational
 morphism $\varphi: X\to Y$ from a normal CM scheme $X$,
 if $R^i\varphi_*\oo_X |_{Y\setminus\{0\}}=0$
 $(i>0)$ holds, then does
  the following vanishings hold?
 \begin{equation}\label{0<i<N-1}
 \ R^i\varphi_*\oo_X=0\ \mbox{ holds\  for\  every\ } 0< i <N-1.
 \end{equation} 
 

In this paper, we demonstrate that this question is only positively answered in a specific case.
Concretely, 
 if $i=1$, the vanishing holds in a stronger form (Theorem \ref{R!}), and as its application we obtain the vanishing 
 $$ \ R^1\varphi_*\oo_X=0$$
 for regular $Y$ (Corollary \ref{R1}). 
  Here, we do not need CM for $X$.
 If $i= 2$, the answer to the question is ``NO'' even for smooth $Y$ and normal CM variety $X$ of any dimension $N\geq3$.
We summarize our results in the following.
A local ring $(R,\m)$ is called {\it equidimensional} 
if $\dim R/\mathfrak{p}=\dim R$ holds 
for every minimal prime ideal $\mathfrak{p}$ of $R$. 
Let $\widehat{R}$ denote the $\m$-adic completion of $R$. 
The ring $R$ is called {\it quasi-unmixed} 
if $\widehat{R}$ is equidimensional. 
Therefore, if $\widehat{R}$ is an integral domain, the local ring $R$ is
quasi-unmixed. 
For example, a regular local ring $R$ is quasi-unmixed,
because it is well known that
the completion $\widehat{R}$ is also
a regular local ring (see, eg., \cite[Theorem 23.7]{mats}), 
in particular $\widehat{R}$ is an integral domain.

\par 
A Noetherian ring $R$ is called \textit{locally quasi-unmixed}
if every $\mathfrak{p} \in \spec R$, $R_\mathfrak{p}$ is a quasi-unmixed local ring.

\begin{thm}\label{R!} 
Let $Y$ be an  Noetherian scheme of dimension $N\geq1$.
Assume $Y$ is locally quasi-unmixed and has pseudo-rational singularities in codimension two and the Serre's condition $S_3$. 
 Let $\varphi: X\to Y$  be a proper birational morphism of finite type with normal $X$. 
 Then, the following vanishing holds,
 \begin{equation}\label{0<i<N-1}
 \ R^1\varphi_*\oo_X=0.
 \end{equation}
\end{thm}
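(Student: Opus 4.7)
The plan is to work locally on $Y$, use pseudo-rationality in codimension two to cut down the support of $R^1\varphi_*\oo_X$ as much as possible, and then extract a contradiction from the $S_3$ hypothesis via a local cohomology argument.

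I first reduce to $Y=\spec R$ with $(R,\m)$ a quasi-unmixed local ring satisfying $S_3$ and pseudo-rationality in codimension two, since the vanishing of a sheaf is checked on stalks. Pseudo-rationality in codimension one forces $R_1$ (one-dimensional pseudo-rational rings are DVRs), and $S_2$ follows from $S_3$, so Serre's criterion gives $R$ normal; consequently the coherent, birational, integrally closed extension $\varphi_*\oo_X$ must coincide with $\oo_Y$. For each prime $\p$ of $R$ with $\height\p\leq 2$, $R_\p$ is pseudo-rational, and the standard characterization of pseudo-rationality in dimension $\leq 2$ as the vanishing of $R^1\varphi_*\oo_X$ for every normal proper birational modification (together with flat base change) yields $(R^1\varphi_*\oo_X)_\p=0$. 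Hence $M:=R^1\varphi_*\oo_X$ is supported in codimension at least three.

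Assuming for contradiction $M\neq 0$, I localize at a minimal prime of $\supp M$ (all hypotheses localize) to reduce to the case where $M$ has finite length, $\dim R\geq 3$, and $\depth R\geq 3$. The spectral sequence
\[
E_2^{p,q}=H^p_\m(R^q\varphi_*\oo_X)\Longrightarrow H^{p+q}_Z(X,\oo_X),\qquad Z=\varphi^{-1}(\m),
\]
combined with $H^p_\m(R)=0$ for $p\leq 2$ (from $\depth R\geq 3$) and $H^0_\m(M)=M$, $H^p_\m(M)=0$ for $p\geq 1$ (finite length), yields $H^1_Z(X,\oo_X)=M$. The long exact sequence of local cohomology for the pair $(X,Z)$, together with the Leray computation $H^1(\varphi^{-1}(U),\oo_X)=H^1(U,\oo_U)=H^2_\m(R)=0$ over the punctured spectrum $U=\spec R\setminus\{\m\}$ (using $M|_U=0$ and $\depth R\geq 3$), then forces $H^1(X,\oo_X)=M$.

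The principal obstacle is to show that this remaining $H^1(X,\oo_X)$ vanishes, equivalently that $\depth_R M\geq 1$, which would contradict the finite length of $M$. I plan to attack this step by combining the quasi-unmixed hypothesis on $R$---which passes equidimensionality to the $\m$-adic completion $\widehat R$ and thereby makes available a well-behaved dualizing complex---with the normality of $X$ and a careful analysis of the completion, either via the theorem on formal functions (expressing $M=\varprojlim_n H^1(X_n,\oo_{X_n})$ for the infinitesimal thickenings $X_n=X\times_Y\spec R/\m^n$) or by a direct local-duality argument on $M$. In either route, the depth-raising step is the technical heart of the proof, requiring the simultaneous use of quasi-unmixedness of $R$, the $S_3$ condition, and the normality of $X$.
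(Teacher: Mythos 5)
Your reductions track the paper's own argument quite closely: localizing, using pseudo-rationality in codimension $\leq 2$ to push the support of $M=R^1\varphi_*\oo_X$ into codimension at least three, using $S_3$ to get $\depth R_\p\geq 3$ at a minimal prime of the support, proving $H^1$ vanishes on the preimage of the punctured spectrum, and identifying $H^1(X,\oo_X)$ with $H^1_E(X,\oo_X)$ --- this is exactly the paper's isomorphism (\ref{isom}) (and note that over affine $Y$ the equality $H^1(X,\oo_X)=\Gamma(Y,R^1\varphi_*\oo_X)=M$ is automatic, so your spectral-sequence computation of $H^1_Z(X,\oo_X)$ adds nothing new; also, extracting the codimension-$\leq 2$ vanishing from pseudo-rationality requires the composite trick of Lemma \ref{composite}, since pseudo-rationality only gives vanishing after passing to a further modification). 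The problem is that your proof stops precisely where the real work begins: you state that the vanishing of $H^1_E(X,\oo_X)$ (equivalently, a positive depth bound on $M$) is ``the technical heart'' and offer only two unworked suggestions (formal functions, or local duality). This is a genuine gap, not a routine verification. The statement is delicate: the paper's example $R=\CC[[x,y,z]]/(x^3+y^3+z^3)$ with its minimal resolution has $H^1(X,\oo_X)=\CC\neq 0$, so some nonformal input tied to $\depth R\geq 3$ and quasi-unmixedness is unavoidable, and neither formal functions nor duality on $X$ (whose dualizing-theoretic behaviour you do not control, as $X$ is only assumed normal) obviously supplies it.

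What the paper actually does at this point is quite different from your sketch. First it reduces to the projective case by Chow's lemma together with Lemma \ref{composite}, so that $X=\proj\R(I)$ for an ideal $I\subset R$ of positive grade. It then invokes the Sancho de Salas exact sequence $H^1_\m(R)\to H^1_E(X,\oo_X)\to H^2_\M(\R(I^n))_0$, kills the left term by $\depth R\geq 3$, and kills the right term by Lemma \ref{depth}: for a quasi-unmixed local ring with $\depth R\geq 3$ and $\proj\R(I)$ normal, one has $\depth_\M\R(I^n)\geq 3$ for $n\gg 0$. That lemma rests on the Huckaba--Marley theorems on normal ideals (normality of large powers $I^n$, $\depth_\M G(I^n)\geq 2$, and the depth formulas comparing $\R(I^n)$ with $G(I^n)$), and this is exactly where quasi-unmixedness enters --- not, as you suggest, through a dualizing complex on the completion. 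Your outline contains no substitute for this Rees-algebra input, and since you never reduce to a projective (blow-up) model you could not even set it up; until the depth-raising step is actually carried out, the argument is incomplete.
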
 
The following corollary follows from Theorem \ref{R!}.
\begin{cor}\label{R1} 
Let $Y$ be a  regular scheme of dimension $N\geq1$.
 Let $\varphi: X\to Y$ be a proper birational morphism of finite type
 with  normal $X$.
  Then $$R^1\varphi_*\oo_X=0.$$
\end{cor}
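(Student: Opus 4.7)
The plan is to deduce Corollary \ref{R1} directly from Theorem \ref{R!} by checking that a regular scheme $Y$ of dimension $N\geq 1$ satisfies each of the three local hypotheses of that theorem: locally quasi-unmixed, Serre's condition $S_3$, and pseudo-rational singularities in codimension two. Since the conclusions on $\varphi$ and on $X$ are identical in the two statements, no further work on the morphism side is needed.

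First, I would verify the quasi-unmixed hypothesis. At any $y \in Y$, the local ring $\oo_{Y,y}$ is regular, so by \cite[Theorem 23.7]{mats} its $\m$-adic completion is again regular, in particular an integral domain and therefore equidimensional. Thus $\oo_{Y,y}$ is quasi-unmixed at every point, and $Y$ is locally quasi-unmixed. Next, since regular local rings are Cohen--Macaulay, $\oo_{Y,y}$ automatically satisfies $S_n$ for every $n\geq 1$, so the $S_3$ condition holds globally on $Y$.

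For the remaining hypothesis I would invoke the classical fact that regular local rings are pseudo-rational. Because $Y$ is regular at every point (in particular at every codimension-two point), it has pseudo-rational singularities in codimension two---in fact in every codimension. With all three hypotheses of Theorem \ref{R!} verified, that theorem applies directly and yields $R^1\varphi_*\oo_X = 0$. The only step carrying any real mathematical content is the citation that regular local rings are pseudo-rational, but this is a standard result, so there is no substantive obstacle; the corollary reduces to a short checking of local hypotheses.
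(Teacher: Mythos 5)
Your proposal is correct and follows essentially the same route as the paper: verify that a regular $Y$ is locally quasi-unmixed (via regularity of the completion), satisfies $S_3$ (since regular implies Cohen--Macaulay), and is pseudo-rational in codimension two (the paper cites Lipman--Teissier for two-dimensional regular local rings), then apply Theorem \ref{R!}. No gap.
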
 
Here, note that we do not assume CM on $X$.

  \begin{rem} The paper   \cite[Theorem 1]{lodh} proves a result similar to that in Theorem \ref{R!}. 
  Our condition on \(Y\) for vanishing is slightly less restrictive than that of \cite{lodh}. 
  Specifically, the key difference between the conditions of these theorems is that we assume \(Y\) is quasi-unmixed, whereas \cite{lodh} assumes \(Y\) is quasi-excellent.
\end{rem}

\begin{ex}\label{R2}
Let $(R, \m)$ be a regular local ring of dimension $N\geq3$ essentially of finite type over $\CC$.
There exists a projective birational morphism
  $\varphi: X\to Y:=\spec R$ with a normal and CM variety $X$ such that
 $$R^2\varphi_*\oo_X\neq0, \  \  \mbox{and}\  \   R^i\varphi_*\oo_X|_{Y\setminus\{\m\}}=0
 \ \ (i>0)
$$

\end{ex}

This paper is organized as follows: in the second section we prove the theorem and the third section
we prove the example.

\vskip.5truecm
\noindent
{\bf Acknowledgement} The authors express their heartfelt thanks to S\'andor Kov\'acs, Linquan Ma, Mircea Musta\cedilla{t}\v{a}, 
Karl Schwede, Kohsuke Shibata, Shunsuke Takagi for useful informations and discussions, in particular Linquan Ma's kindness  to allow us
including  
his example (Example \ref{ma}) in this paper.
We also thank Kay R\"ulling for informing us about the paper \cite{lodh}, and the anonymous referee for
constructive comments.

\section{Vanishing of $R^1$}

 \noindent
 In this section, we concentrate to prove
  Theorem \ref{R!}.
  We use the same symbols as in Theorem \ref{R!}.
 Let $\varphi:X\to Y$ be as in the theorem.
 As our problem is local, we may assume that $Y=\spec R$, where $(R,\m)$ is an quasi-umixed local ring satisfying 
 pseudo-rational in codimension two and $S_3$.
 As $Y$ is affine, we sometimes write $H^i(X, \oo_X)$ for $R^i\varphi_*\oo_X$.
 The following lemma will allow us to assume that $\varphi$ is a projective birational morphism.
 
  \begin{lem}\label{composite} Let  $\varphi:X \to Y$ be a proper birational morphism with normal $X$.
  Assume there exists a proper birational morphism $\psi:X'\to X$ such that $R^1(\varphi\circ\psi)_*\oo_{X'}=0$.
  Then, $$R^1\varphi_*\oo_X=0.$$

 \end{lem}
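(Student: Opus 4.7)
The plan is to exploit the Leray (or Grothendieck) spectral sequence for the composition $\varphi \circ \psi$ together with the normality of $X$, which forces $\psi_*\oo_{X'}=\oo_X$.

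First I would record that, since $\psi \colon X' \to X$ is a proper birational morphism onto the normal scheme $X$, the natural map $\oo_X \to \psi_*\oo_{X'}$ is an isomorphism. This is a standard consequence of the Stein factorization: $\psi$ factors as $X' \to \spec_X(\psi_*\oo_{X'}) \to X$, where the second arrow is finite and birational, and hence is an isomorphism over the normal base $X$. Thus $\psi_*\oo_{X'}=\oo_X$.

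Next, I would apply the Grothendieck spectral sequence
\[
E_2^{p,q}=R^p\varphi_*\bigl(R^q\psi_*\oo_{X'}\bigr) \Longrightarrow R^{p+q}(\varphi\circ\psi)_*\oo_{X'}.
\]
The identification $R^0\psi_*\oo_{X'}=\psi_*\oo_{X'}=\oo_X$ just established gives $E_2^{p,0}=R^p\varphi_*\oo_X$. The associated five-term exact sequence in low degrees reads
\[
0 \to R^1\varphi_*\oo_X \to R^1(\varphi\circ\psi)_*\oo_{X'} \to \varphi_* R^1\psi_*\oo_{X'} \to R^2\varphi_*\oo_X \to R^2(\varphi\circ\psi)_*\oo_{X'}.
\]
By hypothesis the middle term $R^1(\varphi\circ\psi)_*\oo_{X'}$ vanishes, so the injection at the left forces $R^1\varphi_*\oo_X=0$, as required.

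There is no real obstacle in this argument; the only thing to be careful about is justifying $\psi_*\oo_{X'}=\oo_X$, which is why the normality of $X$ appears in the hypothesis. The remainder is a direct reading of the five-term sequence.
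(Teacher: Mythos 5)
Your argument is correct and is essentially the paper's own proof: both use the Leray spectral sequence for $\varphi\circ\psi$, identify $\psi_*\oo_{X'}=\oo_X$ via normality of $X$ (the paper invokes normality without spelling out the Stein factorization step you supply), and conclude from the five-term exact sequence that the injection $R^1\varphi_*\oo_X\hookrightarrow R^1(\varphi\circ\psi)_*\oo_{X'}=0$ forces the vanishing.
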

 \begin{proof} 
Consider the Leray spectral sequence 
  $$E_2^{p,q}=R^p\varphi_*(R^q\psi_*\oo_{X'}) \Rightarrow E^{p+q}=R^{p+q}(\varphi\circ\psi)_*\oo_{X'}$$
induces
the exact sequence 
$$0\to R^1\varphi_*(\psi_*\oo_{X'})\to R^1(\varphi\circ\psi)_*\oo_{X'}\to \varphi_*(R^1\psi_*\oo_{X'})\to.$$
As the middle term is zero, the first term becomes zero too, which implies 
 $R^1\varphi_*\oo_{X}=0$ by the normality of $X$. 
 \end{proof}
 
 In our situation of Theorem \ref{R!},  apply 
 Chow's lemma (\cite[II, Theorem 5.6.1]{ega}) to the given proper birational morphism $\varphi:X\to Y$, 
 we obtain a birational morphism $\psi:X'\to X$ such that 
 the composite $\Phi=\varphi\circ\psi:X'\to Y$ is projective birational.
 By Lemma \ref{composite}, for the proof of $R^1\varphi_*\oo_X=0$ it is sufficient to show $R^1\Phi_*\oo_{X'}=0$.
 Now we may assume that $\varphi:X\to Y=\spec R$ is projective birational and $X$ is normal.
Then, there is an ideal $I\subset R$
 whose blow-up gives $\varphi$.
 Then, we can see that $\height I >0$.
 
 Denote the Rees algebra of $I$ by 
 $$\R(I):=R\oplus IT\oplus I^2T^2\oplus\cdots,$$
 where $T$ is an indeterminate,
 and denote the fiber algebra by 
 $$G(I):=R/I \oplus( I/I^2)T\oplus (I^2/I^3)T^2\oplus \cdots.$$
 We denote the maximal ideal  of $\R(I)$ by
 $$\M:=\m\oplus IT\oplus I^2T^2\oplus\cdots.$$
 Note that $$X=\proj \R(I)\simeq \proj\R(I^n),$$
 for every $n\in \NN$.
 An ideal $I\subset R$ is called a normal ideal if  every $I^n$ $(n\in \NN)$ is integrally closed.
 We know that $I\subset R$ is normal if and only if $\R(I)$ is a normal ring.

 We will interpret our vanishing problem  into the depth of the Rees algebra in the following
 lemma.

 \begin{lem}\label{depth}  
 Let $(R,\m)$ be a quasi-unmixed local ring with 
 $\depth R \ge 3$. 
Let $I \subset R$ be an ideal with $\grade I > 0$.   
Suppose that $X=\proj \mathcal{R}(I)$ is normal. 
Then, for $n \gg 0$ the following holds$:$
\[
\depth_{\mathcal{M}} \mathcal{R}(I^n) \ge 3 
\] 
i.e., $H_{\mathcal{M}}^i(\mathcal{R}(I^n))=0$, for $i \le 2$. 
\end{lem}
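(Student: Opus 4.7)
The plan is to compute the graded local cohomology $H^i_\M(\R(I^n))$ via its relationship with sheaf cohomology on $X=\proj\R(I^n)=\proj\R(I)$, and to show that for $n\gg 0$ every relevant graded piece vanishes for $i\le 2$.

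First I would use the short exact sequence of graded $\R(I^n)$-modules
$$0\to \R(I^n)_+\to \R(I^n)\to R\to 0,$$
where $\R(I^n)_+=\bigoplus_{k\ge 1}I^{nk}T^{k}$ is the irrelevant ideal. Since $\M$ acts on $R$ through its quotient $\M/\R(I^n)_+=\m$, the $\M$-local cohomology of $R$ coincides with $H^i_\m(R)$, which vanishes for $i\le 2$ by the hypothesis $\depth R\ge 3$. The associated long exact sequence of $\M$-local cohomology therefore reduces the problem to showing that $H^i_\M(\R(I^n)_+)=0$ for $i\le 2$.

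Next I would invoke the Sancho de Salas type relation between graded local cohomology of a Rees algebra and sheaf cohomology on $X$. Writing $A=\R(I^n)$ and $A_+$ for its irrelevant ideal, this gives
$$\bigl[H^{i+1}_{A_+}(A)\bigr]_m\simeq H^{i}(X,\oo_X(nm))\qquad(i\ge 1),$$
together with the four-term sequence
$$0\to I^{nm}\to H^0(X,\oo_X(nm))\to [H^1_{A_+}(A)]_m\to 0$$
for $m\ge 0$. Combined with the spectral sequence
$$E_2^{p,q}=H^p_\m\!\left(H^q_{A_+}(A)\right)\Longrightarrow H^{p+q}_\M(A),$$
the desired vanishing follows once the relevant graded pieces of $H^q_{A_+}(A)$ are shown to vanish or to be killed by $H^p_\m$. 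For positive-degree pieces with $m\ge 1$, Serre vanishing for the relatively ample line bundle $\oo_X(1)=I\cdot\oo_X$ on $X\to\spec R$ yields an $n_0$ so that for every $n\ge n_0$ and every $m\ge 1$,
$$H^j(X,\oo_X(nm))=0\quad (j\ge 1),\qquad I^{nm}\stackrel{\sim}{\to} H^0(X,\oo_X(nm)),$$
where the isomorphism uses normality of $X$ together with the stabilization of sections under Veronese. This kills every positive-degree piece of $H^q_{A_+}(A)$ for $q\ge 1$. For the degree-zero piece, since $A_+$ is $0$ in degree $0$, the sequence above in degree $0$ together with the vanishing $H^i_\m(R)=0$ for $i\le 2$ kills the contribution.

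The main obstacle will be the analysis of the negative-degree pieces $[H^q_{A_+}(A)]_m$ for $m\le -1$, which are sheaf cohomologies $H^{q-1}(X,\oo_X(nm))$ of arbitrarily negative twists and are a priori nonzero. Overcoming this is where the quasi-unmixed hypothesis enters: via Ratliff's theorem it ensures that $A=\R(I^n)$ is equidimensional and catenary, so its graded local cohomology at $\M$ is concentrated in a bounded range of degrees; combined with a Grothendieck or Serre duality identification of the negative-degree pieces with duals of positive-degree pieces (which vanish by Serre vanishing for $n$ large), the problematic pieces are forced to cancel. Assembling the vanishings at each graded degree and feeding them through the spectral sequence yields $H^i_\M(\R(I^n))=0$ for $i\le 2$, which is the asserted depth bound.
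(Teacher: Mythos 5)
Your argument only disposes of the easy graded pieces, and it breaks down exactly where the content of the lemma lies. For the degree-zero piece: for $q\ge 2$ one has $[H^q_{A_+}(A)]_0\simeq H^{q-1}(X,\oo_X)$, so the spectral-sequence term feeding into $[H^2_{\M}(A)]_0$ is $H^0_{\m}(H^1(X,\oo_X))$ (equivalently, an $H^1_E(X,\oo_X)$-type term), and nothing in your proposal shows this vanishes. Your claim that it is killed ``since $A_+$ is $0$ in degree $0$'' confuses the grading of $A_+$ with the grading of the modules $H^q_{A_+}(A)$, which are not concentrated in the degrees where $A_+$ lives. This degree-zero vanishing is precisely the nontrivial assertion of the lemma --- it is the term the paper later plugs into the Sancho de Salas sequence to kill $H^1_E(X,\oo_X)$ --- and the paper's Example 2.3 (the cone over an elliptic curve, where $[H^2_{\M}(\R(\m^n))]_0\neq 0$ for every $n$ although $\m$ is normal) shows it cannot follow from Serre vanishing plus formal manipulations: when $\depth R=2$ the statement is false, so normality of $X$, quasi-unmixedness and $\depth R\ge 3$ must enter in an essential way, which they never do in your argument. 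The negative-degree pieces are equally unresolved: quasi-unmixedness (equidimensionality and catenarity of $\R(I^n)$) does not confine $H^i_{\M}$ to a bounded range of degrees, and the Serre/Grothendieck duality you invoke to ``cancel'' negative-degree pieces against positive ones would require graded Cohen--Macaulay/Gorenstein-type hypotheses on $\R(I^n)$ that are stronger than the conclusion you are trying to establish; as written this step is a hope, not a proof.

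For comparison, the paper's proof is a short reduction to known theorems of Huckaba and Marley: normality of $X=\proj\R(I)$ forces $I^n$ to be a normal ideal for $n\gg 0$ (\cite[Proposition 3.6]{HM99}); for a normal ideal in a quasi-unmixed local ring one has $\depth_{\M}G(I^n)\ge 2$ for $n\gg 0$ (\cite[Theorem 3.4]{HM99}, resting on Itoh-type integral-closure results --- this is where quasi-unmixedness genuinely enters); and the depth comparisons between $\R(I^n)$ and $G(I^n)$ (\cite[Proposition 3.6, Theorem 3.10]{HM94}), combined with $\depth R\ge 3$, then give $\depth_{\M}\R(I^n)\ge 3$. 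A purely cohomological route such as yours would in effect have to reprove those results for the degree-zero and negative-degree pieces; the positive-degree part you settle by relative Serre vanishing is the trivial part of the statement.
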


\begin{proof}
Since $X=\proj \mathcal{R}(I)$ is normal, 
all large powers $I^n$ are normal by \cite[Proposition 3.6]{HM99}. 
So we may assume that $I$ is normal. 
This implies that $\depth_{{\mathcal{M}}} G(I^n) \ge 2$ for large $n \gg 0$ 
from \cite[Theorem 3.4]{HM99} (cf. \cite[Theorem 3.1]{hh}). 
\par \vspace{1mm}
Fix an integer $n$ such that  $\depth_{{\mathcal{M}}} G(I^n) \ge 2$.  
If  $\depth_{\mathcal{M}} G(I^n) \ge 3$, then 
\[
\depth_{{ \mathcal{M}}} \mathcal{R}(I^n) 
\ge \depth_{{ \mathcal{M}}} G(I^n)  \ge 3
\]
by \cite[Proposition 3.6]{HM94}. 
Otherwise, $\depth_{{ \mathcal{M}}} G(I^n) 
{=2 < \depth R}$.  
It follows from \cite[Theorem 3.10]{HM94} that 
\[
\depth_{{ \mathcal{M}}} \mathcal{R}(I^n)=\depth_{{ \mathcal{M}}} G(I^n)+1  = 2+1=3. 
\] 
\end{proof}

 
 If $N=\dim Y=1$, then the vanishing $R^1\varphi_*\oo_X=0$ is trivial. 
 
 If $N=2$, as $Y$ is pseudo-rational, by the definition \cite[pp. 156]{lip78}
 there exists a proper birational morphism $\psi: X'\to X$ such that 
  $R^1(\varphi\circ\psi)_*\oo_{X'}=0$.
 Then,  by Lemma \ref{composite} it follows
 $$R^1\varphi_*\oo_X=0.$$

 For $N\geq3$, note that we can apply Lemma \ref{depth} to our case in Theorem \ref{R!}.
 As  $Y=\spec R$ is the surjective image of normal scheme $X=\proj \R(I)$,
 $R$ is integral domain, and therefore
  $\height I>0$ implies $\grade I>0$.
 Suppose that 
 $$R^1\varphi_*\oo_X\neq 0$$
  and let $C\subset Y$ be an irreducible component of $\supp R^1\varphi_*\oo_X$.
 Let $\p\subset R$ be the ideal defining $C$,
 then,  $\dim R_\p\geq 3$ by the discussions for $N\leq 2$.
 We also have $\depth R_\p\geq 3$ as we assume $S_3$ for $R$.
 Hence, we can apply Lemma \ref{depth} to $(R_\p, \p R_\p)$.
    In this setting we will prove: 
    $$H^1(\varphi^{-1}(\spec R_\p), \oo_{\spec R_\p})=0,$$
    which is a contradiction to the definition of $\p$, and therefore, completes the proof of the theorem.
  Denote $R_\p$ by $R$  and denote its maximal ideal by $\m$.
  Replace $Y$ and $X$ accordingly.
     Let $E:=\varphi^{-1}(\m)$, and $$\varphi':X\setminus E\to Y\setminus\{\m\}$$ 
     the restriction of $\varphi:X\to Y$.
  Then, by the construction of $R$, we have the following vanishing 
  \begin{equation}\label{restriction}
  R^1\varphi'_*\oo_ {X\setminus E}=0. 
\end{equation}
 

For the proof of the theorem, we show the following isomorphism first:
 \begin{equation}\label{isom}
 H^1(X,\oo_{X})\simeq H^1_{E}(X,\oo_{X}).
 \end{equation}
 For (\ref{isom}), consider the exact sequence:
 $$H^0(X,\oo_{X})\stackrel{\alpha}\longrightarrow H^0(X\setminus E,\oo_{X})\to H^1_{E}(X,\oo_{X})
\to H^1(X,\oo_{X})\to H^1(X\setminus E,\oo_{X}).$$
 For the required isomorphism, it is sufficient to prove the following  (I) and (II):
 \begin{center}
  (I) $H^1(X\setminus E,\oo_{X})=0$, and (II) $\alpha $ is surjective. 
\end{center}

Indeed, for the  morphism 
$ \varphi' :X\setminus E \to Y\setminus \{\m\}$
we have the spectral sequence

$$E_2^{p,q}=H^p(Y\setminus\{\m\}, R^q\varphi'_*(\oo_{X\setminus E}))\Rightarrow E^{p+q}=H^{p+q}(X\setminus E,\oo_{X}).$$
In the canonical exact sequence 
$$0\to E_2^{1,0}\to E^1\to E_2^{0,1},$$
$E_2^{1,0}=H^1(Y\setminus\{\m\}, \oo_{Y})=H_\m^2(R)=0$ since  $\depth R\geq3$,
while $E_2^{0,1}=H^0(Y\setminus \{\m\}, R^1\varphi'_*\oo_{X\setminus E})=0$ by (\ref{restriction}).
This gives the vanishing $E^1=H^1(X\setminus E,\oo_{X})=0$ as claimed in (I).

On the other hand,  the above spectral sequence satisfies $E_2^{p,q}=0$ for $p<0$ or $q<0$, which gives the isomorphism
$E^0=E_2^{0,0}$.
Then, the homomorphism $$\alpha:H^0(X,\oo_{X})\to H^0(X\setminus E,\oo_{X})=E^0$$ is the same as
$$H^0(Y,\oo_{Y})\to H^0(Y\setminus\{\m\}, \oo_{Y})=E_2^{0,0}.$$   
Hence, $\alpha$ is isomorphic because of  $H_{\m}^i(R)=0$
for $i=0,1$. Q.E.D. of  (II).

Next, consider the (SdS)-sequence
$$\to H_{\m}^1(R)\to H_{E}^1(X,\oo_{X})\to H_\M^2(\R(I^n))_0\to $$
where $H_\M^2(\R(I^n))_0$ is the degree 0 part of the graded module $H_\M^2(\R(I^n))$.
Here, $H_{\m}^1(R)=0$ follows from $\depth R\geq 3$.
On the other hand, 
$H_\M^2(\R(I^n))_0=0$ for $n\gg0$ follows from Lemma \ref{depth}.
Then, we obtain $H_{E}^1(X,\oo_{X})=0$ which yields $H^1(X,\oo_{X})=0$ by (\ref{isom}).
[QED of Theorem \ref{R!}]  
  \vskip.5truecm
 
 \noindent 
[{\it Proof of Corollary \ref{R1}}]

\noindent
  As a regular local ring is quasi-unmixed, the regular scheme $Y$ is locally quasi-unmixed.
  On the other hand, a 2-dimensional regular local ring is pseudo-rational (\cite[\S 4]{lt}),
  which yields $Y$ is pseudo-rational in codimension two.
  It is obvious that $Y$ satisfies $S_3$.
  Therefore $Y$ satisfies the all conditions of Theorem \ref{R!}, which gives the required vanishing.

The following is a well-known example that $H^1(X,\oo_X)=\CC$ for the minimal resolution 
$\varphi: X\to Y=\spec R$ for which the conditions of Theorem \ref{R!} do not hold.
We can show that the ideal which gives $\varphi$ is normal.
We also have the depth of the Veronese rings of the Rees algebra.

  \begin{ex}
Let $R=\mathbb{C}[[x,y,z]]/(x^3+y^3+z^3)$. 
Then, the following hold:
\begin{enumerate}
\item $R$ is a normal domain with $\depth R= \dim R=2$. 
\item $\m=(x,y,z)R$ is normal. 
\item $\big[H^2_{\mathcal{M}} (\mathcal{R}(\m^n))\big]_0 \ne 0$ for every integer $n \ge 1$. In particular, 
$$\depth_{\mathcal{M}} \mathcal{R}(\m^n)=2\ \ \mbox{for\ every\ }n\geq 1.$$  
\end{enumerate}

\end{ex}

\section{Non-vanishing of $R^2$}

\noindent
An example of non-vanishing $R^2f_*\oo_X\neq 0$ for 3-dimensional case was known in the last century  for normal and 
non-CM $X$.
\begin{ex}[S.D. Cutkosky {\cite[Section 3]{cut}}]\label{cut}
  There exists a projective birational morphism $f: Z\to \AA_\CC^3$ with normal $Z$, isomorphic outside the origin $0\in \AA_\CC^3$ such that 
  $$R^2f_*\oo_Z\neq 0.$$
  It is constructed as follows:
  \newline
  \noindent
  First, blowup $X_1\to \AA_\CC^3$ at the origin $\{0\}$, then take an elliptic curve $C$
   in the exceptional divisor $E_1\simeq \PP_\CC^2$.
   Then, blowup $X_2\to X_1$ at appropriate 12 points in $C$. 
   Denote the proper transform of $C$ in $X_2$ by $C'$.
   Then, there exists a base point free linear system on $X_2$ which gives a projective
   birational morphism $\psi: X_2\to Z$ which contracts $C'$ to a normal point in $Z$.
  The point $P=\psi(C')$ is not a CM point of $Z$. 
  Let $f:Z\to \AA_\CC^3$ be the canonical morphism.
  Then, by the Leray's spectral sequence, we obtain $R^2f_*\oo_Z\neq 0.$
  
\end{ex}

The following is an example that a projective birational morphism $\varphi:X\to \AA_\CC^3$
such that $X$ is normal and CM but $R^2\varphi_*\oo_X\neq0$.
\begin{ex}[L. Ma]\label{ma}
Let $f: Z \to \AA_\CC^3$
 be Cutkosky's example \ref{cut}. 
  Let $$g: X\to Z$$ be a projective birational morphism such that $X$ is arithmetically
normal and arithmetically Cohen-Macaulay as in \cite[Corollary 1.4]{b1}
 (in particular, $X$ is
normal and Cohen-Macaulay).
We first claim that $$Rg_*\oo_X = \oo_Z$$
This is local on $Z $ so we may assume $Z = \spec(R)$ for a local ring$R$
and $Y = \proj(\R(I))$ for some $ I \subset R$ such that $\R(I)$ is Cohen-Macaulay (which
comes from the arithmetically Cohen-Macaulay assumption on $X$ ), and the claim follows
from \cite[Theorem 4.1]{lip}.

Now consider the projective birational morphism $\varphi:=f\circ g: X\to \AA_\CC^3$.
 We have $X$ is normal
and Cohen-Macaulay, and $R(f\circ g)_*\oo_X = Rf_*Rg_*\oo_X = Rf_*\oo_Z\neq \oo_{\AA_\CC^3}$.
Concretely, $R^2(f\circ g)_*\oo_X \neq 0$.
\end{ex}

\vskip.5truecm
Before stepping into the construction of our example, 
we explain briefly about Brodmann's Macaulayfications \cite[Corollary 1.4]{b1}  used in Example \ref{ma},
and also about \cite[Corollary 6.12]{b2} which will be used in our example.

The Corollary 1.4 in\cite{b1} studies a three-dimensional quasi-projective variety $V$ over an algebraically closed field.
Under the assumption that $V$ has only finite number of isolated non-CM points, the corollary states
that there exists a reduced (possibly non-irreducible) curve passing through these points such that 
the blowup at the curve gives arithmetically normal and arithmetically CMfication of $V$.
This process preserves normality and regularity, {\it i.e.,} any point of the fiber of a normal (resp. regular) point is also normal 
(resp. regular).

The Corollary 6.12 in \cite{b2} study an arbitrary dimensional local ring $R$.
Under the assumption that $\spec R$ is CM on $\spec R\setminus \{\m\}$, the Corollary 6.12 states
that there exists a curve passing through the point $\m$ such that the blowup at the curve gives arithmetically normal and arithmetically CMfication of $\spec R$.
This process also preserves normality and regularity.

\vskip1truecm

\noindent
{\bf [Construction and the proof of Example \ref{R2}]} 
We  use  Example \ref{ma} and denote $Z$ and $X$ in Example \ref{ma} by $Z^{(3)}$ and $X^{(3)}$, respectively to clarify their dimensions. 
Let  $f^{(3)}:Z^{(3)}\to \AA_\CC^3$ be the Cutkosky's morphism.
We also denote the blowup $X^{(3)}\to Z^{(3)}  $ at $C\subset Z^{(3)}$ by $g^{(3)}$ and the composite $f^{(3)}\circ g^{(3)}:X^{(3)}\to \AA_\CC^3$ by $\varphi^{(3)}$.
Then, $\varphi^{(3)}$ is isomorphic outside $0$ if and only if $C\subset {f^{(3)}}^{-1}(0)$,
and otherwise $\varphi^{(3)}$ is isomorphic outside a curve in $\AA_\CC^3$.
As $X^{(3)}$ is projective birational over $\AA_\CC^3$, there exists an ideal $I\subset \CC[x_1,x_2,x_3]$
such that 
$$X^{(3)}=\proj \R(I).$$

Corollary 1.4 in \cite{b1} guarantees the existence of a curve $C\subset Z^{(3)}$ whose blowup gives  $X^{(3)}$.
However, it is not clear if the curve is in the fiber ${f^{(3)}}^{-1}(0)$ of the origin $0\in \AA_\CC^3$ or not.

So, the following two cases can happen, where we note that $f^{(3)}$ is isomorphic outside the 
origin $0\in \AA_\CC^3$ and 
\begin{enumerate}
\item[(a)] The ideal $I$ is a primary ideal associated to $(x_1,x_2, x_3)$, in case $C\subset {f^{(3)}}^{-1}(0)$, and
\item[(b)] The ideal $I$ has  height $2$ associated primes which define a smooth (possibly  non-irreducible) curve on $\AA_\CC^3\setminus 
\{0\}$, in case $C\not\subset {f^{(3)}}^{-1}(0)$.
\end{enumerate}
Here, about (a), we note that an ideal $I$ whose blowup gives an isomorphism outside a point is locally principal there
in general.
However, as we are working on UFD $\CC[x_1,x_2,x_3]$ we can replace $I$ by a maximal primary ideal.

Starting with the three-dimensional model $\varphi^{(3)}:X^{(3)}=\proj \R(I)\to \AA_\CC^3$,
we construct $N$-dimensional model $\varphi^{(N)}:X^{(N)}\to \AA_\CC^N$ by induction on the dimension $N$.
Then, we will show that $X^{(N)}$ is normal and CM, and $H^2(X^{(N)}, \oo_{X^{(N)}})\neq 0$ and also
$R^i\varphi^{(N)}\oo_{X^{(N)}} |_{\AA_\CC^N\setminus \{0\}}=0\ \ (i>0)$.

\begin{lem}\label{NtoN+1}
  Let $S_N:=\CC[x_1,\ldots, x_N]$ be the polynomial ring of $N$-variables.
  In the connection between $S_N$ and $S_{N+1}$, we denote the $N+1$-th variable $x_{N+1}$ by $z$.
  Then, $S_{N+1}=S_N[z]$ and $S_N$ is a direct summand of $S_{N+1}$.
  Let $I\subset S_N$ be a non-zero ideal and consider the Rees algebra of $(I, z)\subset S_{N+1}$
over $S_{N+1}$.
Then, we obtain the following:
\begin{enumerate}
\item[(i)] $\R(I) $ is a direct summand of $\R((I,z))$.
\item[(ii)] If $\R(I)$ is normal, then $\R((I,z))$ is also normal. 
\item[(iii)] If $X^{(N)}:=\proj\R(I)$ is normal, then for $n\gg0$, 
$Z^{(N+1)}:=\proj \R((I^n,z))$ is normal.

\end{enumerate} 
\end{lem}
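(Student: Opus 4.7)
Part (i) follows immediately from the $S_N$-algebra retraction $\sigma: S_{N+1}\to S_N$ given by $z\mapsto 0$: it sends $(I,z)^k$ onto $I^k$, hence induces a graded ring surjection $\R((I,z))\to \R(I)$ splitting the natural inclusion.

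For part (ii), my plan is to apply Serre's criterion to the Noetherian domain $R:=\R((I,z))$. The element $u:=zT$, which has $T$-degree one, is a regular element of $R$, and two convenient identifications hold. First, the kernel of the retraction of part (i) equals $uR$, giving $R/uR\cong \R(I)$. Second, the degree-zero part (for the $T$-grading) of the localization $R[u^{-1}]$ equals $S_{N+1}[I/z]$, which under the substitution $T\leftrightarrow z^{-1}$ coincides with the extended Rees algebra $\R^e(I):=S_N[IT,T^{-1}]$; since $u$ is a regular homogeneous element of degree one, it follows that $R[u^{-1}]\cong \R^e(I)[u,u^{-1}]$ as $\ZZ$-graded rings. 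A standard argument (take a monic integral relation for an element of $\mathrm{Frac}(\R(I))$ and clear denominators by suitable powers of $T$, then invoke normality of $\R(I)$) shows that $\R^e(I)=\R(I)[T^{-1}]$ is normal whenever $\R(I)$ is, so both $R/uR$ and $R[u^{-1}]$ are normal rings.

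With these in hand, Serre's criterion $(R_1)+(S_2)$ falls out. For $(S_2)$: since $u$ is regular, $R$ is $(S_2)$ provided $R/uR$ is $(S_1)$ and $R[u^{-1}]$ is $(S_2)$, both of which follow from normality. For $(R_1)$: a height-one prime $\p\subset R$ either avoids $u$, in which case $R_\p$ is a localization of the normal ring $R[u^{-1}]$ and hence a DVR, or contains $u$; in the latter case $\p=(u)$ because $R/uR$ is a domain, so $R_\p$ is a one-dimensional Noetherian local domain with principal maximal ideal, again a DVR. Thus $R$ is normal.

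Part (iii) then follows readily: normality of $X^{(N)}=\proj\R(I)$ forces $I^n$ to be a normal ideal for $n\gg 0$ by \cite[Proposition 3.6]{HM99} (as used in Lemma \ref{depth}), i.e., $\R(I^n)$ is a normal ring. Applying part (ii) with $I^n$ in place of $I$ shows $\R((I^n,z))$ is normal, and hence $Z^{(N+1)}=\proj\R((I^n,z))$ is normal. The main technical hurdle is the bookkeeping required to identify $R/uR$ and the degree-zero part of $R[u^{-1}]$ with $\R(I)$ and $\R^e(I)$, via the decomposition $(I,z)^k=I^k+I^{k-1}z+\cdots+Iz^{k-1}+z^kS_{N+1}$; everything else reduces to standard properties of Rees algebras together with Serre's normality criterion.
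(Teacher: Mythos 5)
Your parts (i) and (iii) follow the paper's own route: (i) via the splitting induced by $z\mapsto 0$ (the paper phrases it as the unique decomposition of an element into its $z$-free and $z$-divisible parts), and (iii) via \cite[Proposition 3.6]{HM99} plus (ii), exactly as in the paper. Part (ii) is where you genuinely diverge. The paper argues directly and elementarily that every power $(I,z)^n$ is integrally closed, by induction on $n$: the degree-zero term (in $z$) of an equation of integral dependence shows $f_0\in\overline{I^n}=I^n$, after which one reduces to $f_0=0$, factors out $z$, and uses $\overline{(I,z)^n}\colon z=\overline{(I,z)^{n-1}}$; normality of $\R((I,z))$ then follows from the equivalence ``$J$ normal $\iff$ $\R(J)$ normal'' already quoted in Section 2. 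You instead deform along the degree-one element $u=zT$: the identifications you defer as bookkeeping do hold, since $(I,z)^k=\bigoplus_{m\ge 0}I^{\max(k-m,0)}z^m$ as an $S_N$-module gives both $(I,z)^k\cap zS_{N+1}=z(I,z)^{k-1}$ (so $\ker$ of the retraction is $uR$ and $R/uR\simeq\R(I)$) and $\bigl(R[u^{-1}]\bigr)_0=S_N[z,I/z]\simeq\R^e(I)$, and your verification of $(R_1)+(S_2)$ from normality of $R/uR$ and $R[u^{-1}]$ is correct. One small caveat: your parenthetical justification that $\R(I)$ normal implies $\R^e(I)=\R(I)[T^{-1}]$ normal by ``clearing denominators by powers of $T$'' is not quite right as stated, since $T\notin\R(I)$; the clean argument is that over the normal domain $S_N$ one has $\R(I)$ normal $\iff$ all $I^n$ integrally closed $\iff$ $\R^e(I)$ normal (gradedness of integral closure, or Huneke--Swanson), which is standard and fills this in without difficulty. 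In comparison, the paper's proof is self-contained and needs nothing beyond manipulating integral equations, while yours is more structural (a deformation argument along the exceptional hyperplane $z=0$ of the Rees algebra) at the cost of invoking Serre's criterion and the standard facts about extended Rees algebras; both work verbatim with $S_N$ replaced by any normal Noetherian domain.
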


\begin{proof} Every element of $\R((I,z))$ is uniquely expressed as the sum of an element without $z$ and that with $z$.
The element without $z$ is contained in $\R(I)$, this yields the proof of (i).

For the proof of (ii), it is enough to prove that $(I,z)^n$ is integrally closed by an induction on $n \ge 1$. 
\par \vspace{1mm}
Now suppose $f(z)=f_0+f_1z+\cdots +f_{\ell}z^{\ell} \in \overline{(I,z)^n}$, where $f_i \in R$. 
We first show $f_0 \in \overline{I^n}=I^n$. 
By definition, there exist $N \ge 1$ and $a_i(z) \in (I,z)^{ni}$ for $i=1,2,\ldots,N$ such that 
\begin{equation} \label{Eq-int}
f(z)^N +a_1(z)f(z)^{N-1}+\cdots+a_N(z)=0, 
\end{equation}
where $a_i(z)=a_{i0}+a_{i1}z+\cdots+a_{i\ell_i}z^{\ell_i} \in (I,z)^{ni}$ for each $i$. 
In the degree $0$ term in Eq(\ref{Eq-int}), we have 
\[
f_0^N+a_{10}f_0^{N-1}+\cdots+a_{N0}=0
\] 
For each $i$, $a_{i}(z) \in (I,z)^{ni}$ implies $a_{i0} \in I^{ni}$. 
Hence the above equation gives an integral equation of $f_0 \in \overline{I^n}=I^n$, as required. 
\par \vspace{1mm}
In particular, when $n=1$, this means $f =f_0+z(f_1+f_2z+\cdots+f_{\ell}z^{\ell-1})\in (I,z)$.
\par \vspace{1mm}
Suppose $n \ge 2$ and $\overline{(I,z)^k} = (I,z)^k$ for every $k=1,2,\ldots,n-1$. 
Then we must show $\overline{(I,z)^n}=(I,z)^n$. 
Now suppose $f(z) \in  \overline{(I,z)^n}$. 
By arguing as above, we obtain $f_0 \in I^n$. 
So we may assume $f_0=0$. 
Then we can write $f(z)=z \cdot g(z)$. 
Notice that $g(z) \in \overline{(I,z)^n} \colon z=  \overline{(I,z)^{n-1}}$. 
By assumption, we get $f(z)=z \cdot g(z) \in z(I,z)^{n-1} \subset (I,z)^n$.

(iii) By the assumption of (iii) and also by $\height I>0$, we see that
$\R(I^n)$ is normal  for $n\gg0$ (\cite[Proposition 3.6]{HM99})
Then, by (ii) $\R((I^n, z))$ is also normal for $n\gg0$,
a fortiori $Z^{(N+1)}:=\proj \R((I^n,z))$ is normal.
\end{proof}

Once we have $\varphi^{(N)}: X^{(N)}=\proj\R(I)\to \AA_\CC^N$ for $I\subset S_N$
    such that $X^{(N)}$ is normal and CM with $H^2(X^{(N)},\oo_{X^{(N)}})\neq 0$
then
we obtain the $(N+1)$-dimensional model 
$$f^{(N+1)}:Z^{(N+1)}=\proj\R((I^n,z))\to \AA_\CC^{N+1}$$ 
for $n\gg0$ such that
$$H^2(Z^{(N+1)}, \oo_{Z^{(N+1)}})\neq 0,$$
by (i) in Lemma \ref{NtoN+1}.
As $X^{(N)}$ is normal, so is $Z^{(N+1)}$  by (iii) of Lemma \ref{NtoN+1}.
However even if $X^{(N)}$ is CM, the scheme $Z^{(N+1)}$ is not necessarily CM.
We will see that $Z^{(N+1)}$ has at most one non-CM point
so that we can apply Brodmann's Macaulayfication to get $X^{(N+1)}$ which will be normal and CM
satisfying
 $$H^2(X^{(N+1)}, \oo_{X^{(N+1)}})\neq 0.$$

\begin{lem}\label{CMtoN+1}
  Let $I\subset S_N:=\CC[x_1,\ldots,x_N] $ be an ideal either of the following types:
  \begin{enumerate}
  \item[(a)]
 a maximal primary associated to $(x_1,\ldots,x_N)\subset S_N$, or
    \item [(b)]
  $\surd\overline{I}$ defines a curve which is  smooth (possibly  non-irreducible) on $\AA_\CC^N\setminus \{0\}$.
\end{enumerate}
   Assume $X^{(N)}:=\proj \R(I)$ is normal and CM such that the restriction of $\varphi^{(N)}: X^{(N)}\to
   \AA_\CC^N$ onto $\AA_\CC^N\setminus \{0\}$ is either isomorphic (in case (a)) or 
   the blowup at the smooth curve (in case (b)).
   Then, replacing $I$ by $I^n$ for $n\gg 0$, we obtain the following:
   \begin{enumerate}
\item[(i)]   $Z^{(N+1)}:=\proj \R((I, z))$ is normal and has at most one non-CM point,
\item[(ii)] There is a projective birational morphism $g^{(N+1)}: X^{(N+1)}\to Z^{(N+1)}$ such that 
$X^{(N+1)}$ is normal and CM, and $Rg^{(N+1)}_*\oo_{X^{(N+1)}}=\oo_{Z^{(N+1)}}$ holds. 
\end{enumerate}
\end{lem}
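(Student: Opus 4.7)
The plan is to analyze $Z^{(N+1)} := \proj \R((I, z))$ through its standard affine covering, reducing the Cohen--Macaulay question to the known CM property of $X^{(N)}$ together with a single ``new'' chart, and then to apply Brodmann's Macaulayfication to eliminate the unique possible non-CM point without changing the structure sheaf.

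Normality in (i) follows directly from Lemma \ref{NtoN+1}(iii) after replacing $I$ by a sufficiently high power $I^n$; I make this replacement once and assume $I$ is normal, hence integrally closed. For the CM assertion, let $a_1, \ldots, a_k$ generate $I$, and consider the affine covering $Z^{(N+1)} = U_0 \cup U_1 \cup \cdots \cup U_k$ with $U_0 = D_+(zt) = \spec S_N[z, I/z]$ and $U_i = D_+(a_i t)$ for $i \geq 1$. Setting $w = z/a_i$, the relation $z = a_i w$ identifies $U_i$ with $V_i \times \AA^1_\CC$, where $V_i = \spec S_N[I/a_i]$ is the standard $i$-th affine chart of $X^{(N)}$. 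Since $X^{(N)}$ is CM, every $U_i$ with $i \geq 1$ is CM, so the non-CM locus of $Z^{(N+1)}$ is contained in the closed set $U_0 \setminus \bigcup_{i \geq 1} U_i$, which in $U_0$ is cut out by $y_1 = \cdots = y_k = 0$ with $y_i := a_i/z$.

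A point of $\{y_i = 0\}_i \subset U_0$ has the form $(x_0, z_0, 0, \ldots, 0)$ with $a_i(x_0) = z_0 y_i = 0$, forcing $x_0 \in V(I)$. If $z_0 \neq 0$, then $z$ is a unit in $\oo_{U_0, p}$; the relations $z y_i = a_i(x)$ allow one to eliminate the $y_i$, so $\oo_{U_0, p}$ is a localization of the regular ring $\oo_{\AA^{N+1}}$ and is in particular CM. This reduces the problem to points $(x_0, 0, 0, \ldots, 0)$ with $x_0 \in V(I)$. In case (a), $V(I) = \{0\}$ leaves only the single point $(0, 0, 0, \ldots, 0) \in U_0$. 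In case (b), for $x_0 \in V(I) \setminus \{0\}$ the hypothesis that $\varphi^{(N)}$ restricts to the blowup at the smooth curve $V(I) \setminus \{0\}$, combined with the fact that $I$ is integrally closed (so its unique primary component at $x_0$ coincides with the reduced prime ideal of the curve), identifies $I$ locally with $(u_1, \ldots, u_{N-1})$ in suitable coordinates $(u_1, \ldots, u_{N-1}, v)$. Then $(I, z) = (u_1, \ldots, u_{N-1}, z)$ is the ideal of a smooth curve in the regular scheme $\spec S_{N+1, (x_0, 0)}$, and the blowup of a smooth ambient along a smooth subvariety is smooth. Thus $Z^{(N+1)}$ is regular at every preimage of $(x_0, 0)$, so the non-CM locus of $Z^{(N+1)}$ is contained in the single point of $U_0$ lying over $(0, 0) \in \AA^{N+1}$, which proves (i).

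For (ii), let $p_0$ denote this unique possibly non-CM point. Since $\spec \oo_{Z^{(N+1)}, p_0}$ is CM on the punctured spectrum, \cite[Corollary 6.12]{b2} supplies a curve $C$ through $p_0$ whose blowup $g^{(N+1)}: X^{(N+1)} \to Z^{(N+1)}$ is arithmetically normal and arithmetically CM; since the construction preserves normality and regularity, $X^{(N+1)}$ is normal and CM everywhere. Finally, the arithmetically CM property means that the Rees algebra defining $g^{(N+1)}$ is Cohen--Macaulay, so \cite[Theorem 4.1]{lip} yields $Rg^{(N+1)}_* \oo_{X^{(N+1)}} = \oo_{Z^{(N+1)}}$, exactly as in Example \ref{ma}. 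The main obstacle is the local identification of $I$ with the reduced ideal of the smooth curve at points of $V(I) \setminus \{0\}$ in case (b); once this is in hand, smoothness of the blowup of a smooth subvariety reduces the whole Cohen--Macaulay analysis to a single distinguished point over the origin.
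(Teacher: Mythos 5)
Your argument follows the paper's proof quite closely: the same affine covering of $Z^{(N+1)}$ by $\widetilde{U}_0=\spec S_N[z,g_1/z,\ldots,g_r/z]$ and $\widetilde{U}_i\cong U_i\times\AA^1_\CC$ ($i\ge1$), the same localization of the non-CM locus into $(\spec S_N/I)\times\AA^1_\CC$, the same disposal of the points with $z\neq0$, and the same final appeal to Brodmann's Macaulayfication plus \cite[Theorem 4.1]{lip}. However, two steps are not sound as written. In case (b) you justify regularity of $Z^{(N+1)}$ over $(x_0,0)$, $x_0\in V(I)\setminus\{0\}$, by claiming that integral closedness of $I$ forces its primary component at $x_0$ to be the reduced ideal $(u_1,\ldots,u_{N-1})$ of the smooth curve. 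That inference is false: every power $(u_1,\ldots,u_{N-1})^m$ is integrally closed, so an integrally closed ideal cosupported on the curve need not be radical; worse, you have already replaced $I$ by $I^n$ with $n\gg0$ to obtain normality of $\R((I,z))$, so at $x_0$ the local ideal is contained in $(u_1,\ldots,u_{N-1})^n$ and is certainly not the reduced ideal. Then $(I,z)$ is not locally the ideal of a smooth subvariety, and indeed when the local ideal is $(u_1,\ldots,u_{N-1})^m$ with $m\ge2$ the blowup of $((u_1,\ldots,u_{N-1})^m,z)$ is singular at the point of the $z$-chart over $(x_0,0)$ where all Rees generators vanish (all defining relations there have order at least two, so the tangent space is too large). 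So your step ``regular at every preimage of $(x_0,0)$'' fails; what is actually needed there is Cohen--Macaulayness of $Z^{(N+1)}$ over $(\AA^N_\CC\setminus\{0\})\times\AA^1_\CC$, which the paper gets by appealing directly to the hypothesis that $\varphi^{(N)}$ restricts to the blowup of the smooth curve, not by identifying $I$ with the radical ideal.

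The second issue is in (ii): you invoke \cite[Corollary 6.12]{b2} as a black box attached to the local ring $\oo_{Z^{(N+1)},p_0}$. The substance of the paper's proof of (ii) is precisely the verification that Brodmann's machinery applies and yields a genuine center to blow up on $Z^{(N+1)}$: one exhibits $f=z$ and a maximal-primary ideal $\q\subset\oo_{\widetilde{U}_0,P}$, namely $(z,g_1/z,\ldots,g_r/z)$ in case (a) and $(z,x_1,g_1/z,\ldots,g_r/z)$ in case (b) with a coordinate $x_1\notin\sqrt{\overline{I}}$ (which exists exactly because one is in case (b)), checks the hypotheses of \cite[Lemma 6.7]{b2} to produce the generic complete intersection $L=(y_1,\ldots,y_N)$ through $P$, and then blows up $\sqrt{\overline{L}}^{\,h}$ for $h\gg0$; only after this does one obtain arithmetic normality and arithmetic CM-ness of $X^{(N+1)}$, and hence, via the Cohen--Macaulay Rees algebra and \cite[Theorem 4.1]{lip}, the identity $Rg^{(N+1)}_*\oo_{X^{(N+1)}}=\oo_{Z^{(N+1)}}$. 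Your version skips both the verification of Brodmann's hypotheses and the passage from the local statement at $p_0$ to a blowup of all of $Z^{(N+1)}$, which is where the actual work of part (ii) lies.
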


\begin{proof} Let $I$ be generated by $g_1,\ldots,g_r\in S_N=\CC[x_1,\ldots,x_N]$, then 
$(I,z)\subset S_{N+1}=\CC[x_1,\ldots,x_N, z] $ is generated by $z, g_1,\ldots,g_r$.
By using these generators, we have an open affine covering $X^{(N)}=\bigcup_{i=1}^r U_i$, where
$$U_i=\spec S_N\left[ \frac{g_1}{g_i},\ldots,\stackrel{i}\vee,\ldots  \frac{g_r}{g_i}\right].$$
In the same way, we have an open affine covering $Z^{(N+1)}=\bigcup_{i=0}^r \widetilde{U}_i$, where
$$\widetilde{U}_0=\spec S_N\left[ z, \frac{g_1}{z},\ldots\ldots  \frac{g_r}{z}\right]\ \ \mbox{and}$$
$$\begin{array}{lll}
\widetilde{U}_i&=\spec S_N\left[ z, \frac{z}{g_i},\frac{g_1}{g_i},\ldots,\stackrel{i}\vee,\ldots  \frac{g_r}{g_i}\right]&  \\
   &=\spec S_N\left[\frac{z}{g_i},\frac{g_1}{g_i},\ldots,\stackrel{i}\vee,\ldots  \frac{g_r}{g_i}\right]
&=U_i\times \AA_\CC^1\ \ (i=1,\ldots,r)\\
\end{array}$$
Then, for $i=1,\ldots,r$, $\widetilde{U}_i$ is CM since $U_i$ is CM.
Hence, the non-CM locus on $Z^{(N+1)}$ is on the closed subscheme
$$\widetilde{U}_0\setminus\bigcup_{i=1}^r\widetilde{U}_i=Z\left(\frac{g_1}{z},\ldots\ldots , \frac{g_r}{z}\right)
=:\mathcal Z,$$
which is isomorphic to
$$\spec \CC[x_1,\ldots, x_N,z]/(g_1,\ldots, g_r)=(\spec\CC[x_1,\ldots, x_N]/I )\times \AA_\CC^1.$$
Here, denote $\mathcal{Z}_0:=\spec\CC[x_1,\ldots, x_N]/I $, then $\mathcal{Z}_0$ is mapped isomorphically 
to the center of the 
blow up $\varphi^{(N)}:X^{(N)}\to \AA_\CC^N$.
Let 
$$P:=(0, 0)\in \mathcal{Z}_0\times \AA_\CC^1\subset \widetilde{U}_0,$$ 
where the first coordinate $0\in \mathcal{Z}_0$ corresponds to the origin $0\in \AA_\CC^N$ by $\varphi^{(N)}$,
and the second coordinate $0\in  \AA_\CC^1$ is the point $z=0$ in $\AA_\CC^1=\spec\CC[z]$.
Then, $P\in \widetilde{U}_0$ is the unique possible non-CM point in $Z^{(N+1)}$.
Indeed, every point $$Q\in \mathcal{Z}_0\times (\AA_\CC^1\setminus \{0\})$$ 
is a smooth point in $\widetilde{U}_0$, because $z(Q)\neq 0$,
which implies $Q $ is not in the exceptional divisor that is defined by $z=0$ on $\widetilde{U}_0$. 

 In case (a), the support of $\mathcal{Z}_0$ is one point.
Therefore, in this case, smoothness of $\widetilde{U}_0$ at 
 any point in $\mathcal{Z}_0\times (\AA_\CC^1\setminus \{0\})$ implies that 
$P\in Z^{(N+1)}$ is the unique possible non-CM point.

In case (b),  
$\widetilde{U}_0$ is smooth at
every point $Q\in (\mathcal{Z}_0\times \{0\})\setminus\{P\}$, because $Z^{(N+1)}$ is smooth on
$(\psi^{(N+1)})^{-1}\left((\AA_\CC^N\setminus\{0\})\times \AA_\CC^1\right)$ by the assumption on $I$ and $Q$ is in this
inverse image.
This completes the proof of (i).

Next we will find a center of the blowup to get a normal Macaulayfication as in \cite[Corollary 6.12]{b2}.

In case (a), let $f:=z$ and 
$$\q:=(z, \frac{g_1}{z},\ldots, \frac{g_r}{z})\subset \m_P\subset \oo_{\widetilde{U}_0,P}.$$
Then $f\in \oo_{\widetilde{U}_0,P}$ and the maximal primary ideal $\q$ satisfy the condition of \cite[Lemma 6.7]{b2},
and therefore they produce a generic complete intersection $L=(y_1,\ldots, y_N)$, where $N=\dim Z^{(N+1)}-1$
and the locus of $L$ contains a non-CM point $P\in \widetilde{U}_0$.
(The ideal generated by the sequence obtained in \cite[Lemma 6.7]{b2} is called a generic complete intersection and
the generators become a generic $pS^+$ sequence, see \cite[Remark 6.9]{b2}.)
By \cite[Corollary 6.12]{b2}, the blow-up by $\surd\overline{L}^h$ gives a normal and arithmetically 
Cohen-Macaulay for $h\gg0$. 

In case (b), there is a coordinate function, say $x_1$, of $\CC[x_1,\ldots, x_N]$ such that $x_1\not\in \surd\overline{I}$.
Indeed, otherwise $I\subset S_N$ would be maximal primary at $0$.
Let $f:=z$ and $$\q:=(z, x_1,\frac{g_1}{z},\ldots, \frac{g_r}{z})\subset \m_P\subset \oo_{\widetilde{U}_0,P}.$$ 
Then $f\in \oo_{\widetilde{U}_0,P}$ and the maximal primary ideal $\q$ satisfy the condition of \cite[Lemma 6.7]{b2},
and therefore we also obtain a generic complete intersection $L=(y_1,\ldots, y_N)$.
In the same way as in (a), the blow-up by $\surd\overline{L}^h$ gives a normal and arithmetically 
Cohen-Macaulay for $h\gg0$.  

Now in both cases (a) and (b), take the blowup $g^{(N+1)}: X^{(N+1)}\to Z^{(N+1)}$ by the ideal $\surd\overline{L}^h$, then 
$X^{(N+1)}$ is normal and arithmetically CM.
Here, by \cite[Theorem 4.1]{lip}, it follows $R\varphi^{(N+1)}_*\oo_{X^{(N+1)}}=\oo_{Z^{(N+1})}$.
\end{proof}

By  Lemma \ref{NtoN+1} and  \ref{CMtoN+1}, 
 $H^2(X^{(N+1)},\oo_{X^{(N+1)}})\neq 0$ holds if $ H^2(X^{(N)},\oo_{X^{(N)}})\neq 0$.
We know  $ H^2(X^{(3)},\oo_{X^{(3)}})\neq 0$.
Then, by induction on the dimension $N$, we obtain  for every $N\geq 3$
$$\varphi^{(N)}:X^{(N)}\to \AA_\CC^N$$
such that  $X^{(N)}$ is normal and CM, and satisfies
    $$R^i\varphi_*^{(N)}\oo_{X^{(N)}}|_{\AA_\CC^N\setminus\{0\}}=0,\ \ (i>0), \ \mbox{and}\ 
        H^2(X^{(N)},\oo_{X^{(N)}})\neq 0.$$

\begin{rem}
Let $k$ a field of characteristic not $3$ and 
put $R=k[x,y,z]_{(x,y,z)}$ and $\m=(x,y,z)$. 
Then Huckaba and Huneke \cite[Theorem 3.11]{hh} gave an 
$\m$-primary normal ideal $I \subset R$ with $H^2(X,\oo_X) \ne 0$, 
where $X=\proj \R(I)$.
$X$ is known to be non-CM, however the dimension of non-CM locus is not known.
If $k$ is an algebraically closed field and the non-CM locus of $X$ consists of  finite closed points,
then by starting from this, one can obtain a similar example as Example 1.5. 
\end{rem}


\end{document}